\documentclass[letterpaper, 10 pt, conference]{ieeeconf}
\IEEEoverridecommandlockouts  %
\usepackage[utf8]{inputenc}

\usepackage[T1]{fontenc}

\usepackage[T1]{fontenc}    %
\usepackage[hidelinks]{hyperref}       %
\usepackage{url}            %
\usepackage{booktabs}       %
\usepackage{amsfonts}       %
\usepackage{nicefrac}       %
\usepackage{xspace}

\usepackage{siunitx}
\usepackage{bm}
\usepackage{comment}
\usepackage{amsmath}
\usepackage{amsthm}
\usepackage{graphicx}
\usepackage{rotating}
\usepackage{float}
\usepackage{mathrsfs}
\usepackage{amssymb}
\usepackage{autobreak}
\usepackage{mathtools}
\usepackage{wrapfig}
\usepackage{xcolor}
\usepackage{bbm}
\usepackage[ruled,linesnumbered]{algorithm2e} %
\usepackage[]{algpseudocode}

\usepackage{subcaption}
\captionsetup{font=footnotesize}

\usepackage{tabularx}

\usepackage{lipsum}
\usepackage{outlines}
\usepackage{blindtext}
\usepackage{multicol}
\usepackage[capitalise]{cleveref}
\usepackage{tikz}
\usetikzlibrary{shapes,backgrounds}
\usepackage{array}
\usepackage[export]{adjustbox}
\newsavebox{\algleft}
\newsavebox{\figright}
\crefname{assumption}{Assumption}{Assumptions}

\usepackage{cite}
\usepackage{setspace}

\usepackage{doi}

\newtheorem{theorem}{Theorem}
\newtheorem{lemma}[theorem]{Lemma}
\newtheorem{proposition}[theorem]{Proposition}
\newtheorem{corollary}[theorem]{Corollary}
\newtheorem{remark}{Remark}
\newtheorem{assumption}{Assumption}

\newcommand{\coninput}{u}
\newcommand{\state}{x}
\newcommand{\gpinp}{z}
\newcommand{\dynGP}{g}
\newcommand{\dynGPtrue}{g^{\mathrm{tr}}}
\newcommand{\dynNom}{f}
\newcommand{\statedim}{{n_\state}}
\newcommand{\inputdim}{{n_\coninput}}
\newcommand{\gpdim}{{n_\dynGP}}
\newcommand{\datasetdim}{D}
\newcommand{\testinputdim}{n_\star}
\newcommand{\horizon}{H}
\newcommand{\numsqp}{L} 
\newcommand{\numsamples}{N}
\newcommand{\prob}{p}
\newcommand{\dynSet}[1][]{\mathcal{G}_{#1}}
\newcommand{\n}{n}
\newcommand{\hessian}{H}
\newcommand{\kernelfunc}{k}

\newcommand{\kernelfuncjoint}{k_{\mathrm{d}}}
\newcommand{\mujoint}{\mu_{\mathrm{d}}}
\newcommand{\Sigmajoint}{\Sigma_{\mathrm{d}}}
\newcommand{\gpsubspace}{B_d}
\newcommand{\Bg}{B_\dynGP}

\newcommand{\constraint}{h}
\newcommand{\gpmeasvar}{\lambda}
\newcommand{\gppostvar}{\sigma}
\newcommand{\ubdyn}[1][]{\overline{\dynGP_{#1}}}
\newcommand{\lbdyn}[1][]{\underline{\dynGP_{#1}}}
\newcommand{\GPtrunc}[1][N]{\mathcal{#1}_{[\lbdyn, \ubdyn]}}

\newcommand{\ki}{k}
\newcommand{\cost}{l}

\newcommand{\xpos}{x_p}
\newcommand{\ypos}{y_p}
\newcommand{\xposref}{x_{\mathrm{ref}}}
\newcommand{\yposref}{y_{\mathrm{ref}}}

\newcommand{\R}{\mathbb{R}}

\newcommand{\muconst}[1][]{\mu_{#1}}

\newcommand{\betaconst}[1][]{\beta_{#1}}

\begin{document}

\title{Towards safe and tractable Gaussian process-based MPC: \\ Efficient sampling within a sequential quadratic programming framework}
\author{
    Manish Prajapat$^{\star}$, Amon Lahr$^{\star}$, Johannes K\"ohler, Andreas Krause, Melanie N. Zeilinger
    \thanks{$^\star$First author, equal contribution. E-mail correspondence to: \texttt{manishp@ai.ethz.ch}; \texttt{amlahr@ethz.ch}.
    All authors are from ETH Zürich. Manish Prajapat is supported by ETH
AI center, Amon Lahr by the European Union's Horizon 2020 research and innovation programme, Marie~Sk\l{}odowska-Curie grant agreement No. 953348,~\mbox{ELO-X}, and
    Johannes  K\"ohler by the Swiss National Science Foundation under NCCR Automation, grant agreement 51NF40 180545.
}
}
\maketitle

\begin{abstract}
\looseness -1 
Learning uncertain dynamics models using Gaussian process~(GP) regression has been demonstrated to enable high-performance and 
safety-aware
control strategies for challenging real-world applications. 
Yet, for computational tractability, 
most approaches for Gaussian process-based model predictive control (GP-MPC) are based on 
approximations of the 
reachable set
that are either overly conservative or impede the controller's 
safety guarantees.
To address these challenges, 
we propose a robust GP-MPC formulation 
that guarantees constraint satisfaction 
with high probability.
For its tractable implementation, we propose a sampling-based GP-MPC approach that iteratively generates consistent dynamics samples from the GP within a sequential quadratic programming framework.
We
highlight 
the improved reachable set approximation compared to existing methods,
as well as real-time feasible computation times, 
using two numerical examples.
\end{abstract}

\section{Introduction}

Gaussian process~(GP) regression~\cite{rasmussen_gaussian_2006}
offers versatile representation capabilities and inherent uncertainty quantification
for learning dynamics models.
Combined with model predictive control~(MPC)~\cite{rawlingsModelPredictiveControl2020},
GP models
have 
demonstrated
to be highly effective 
at improving the control performance 
while ensuring safety-awareness 
with respect to the 
remaining 
model uncertainty, 
across a diverse range of challenging 
real-world 
scenarios~\cite{ostafew_robust_2016,cao_gaussian_2017,carron_data-driven_2019,kabzan_learning-based_2019,hewing_cautious_2020,vaskov_friction-adaptive_2022}. 
Yet, 
propagating the uncertainty 
within the
resulting 
Gaussian process state-space model~(GP-SSM) 
is non-trivial and
remains a major bottleneck for safety-critical GP-MPC applications~\cite{hewing_learning-based_2020}.
To remedy this issue, 
a standard approach~\cite{kabzan_learning-based_2019,hewing_cautious_2020,vaskov_friction-adaptive_2022,lahr_zero-order_2023-1}
is to
use a linearization-based approximation of the probabilistic reachable set induced by the uncertain GP model~\cite{girard_gaussian_2002-1}. 
However, 
in addition to the linearization error,
this approximation is based on a strong independence assumption between the GP predictions at subsequent time steps~\cite{hewing_simulation_2020}.
Similarly, 
approximate uncertainty propagation methods,
such as 
exact moment-matching~\cite{quinonero-candela_propagation_2003} with a spectral GP approximation~\cite{pan_prediction_2017},
or
unscented filtering~\cite{ostafew_robust_2016,quirynen_uncertainty_2021}, 
make it difficult
to 
guarantee
safety
for 
the resulting
GP-MPC formulations.

To establish 
constraint-satisfaction guarantees
for GP-MPC,
stochastic and robust 
MPC
formulations have been proposed.
In~\cite{koller_learning-based_2018},
a robust MPC approach
has been suggested to iteratively over-approximate 
the uncertain prediction
based on 
high-probability confidence regions of the GP.
Yet, 
this approach tends to be overly conservative
due to the ellipsoidal over-approximation based on global Lipschitz bounds, 
as well as robustification of the control input against
the worst-case GP estimate at each time step \emph{independently}, 
without taking into account 
that the uncertainty is of epistemic nature, 
i.e., 
that the true function does not change over the prediction horizon. 
To remedy 
this
conservatism,
\cite{maddalena2021kpc} employs a Gaussian-process multi-step predictor; 
however,
learning a multi-step predictor constitutes a more nonlinear and higher-dimensional regression problem,
with stronger computational requirements,
compared to
single-step models.

\looseness-1 For stochastic nonlinear MPC, 
it has been proposed 
to 
sample 
controlled
trajectories of the GP-SSM offline to determine fixed constraint tightenings
for online use, 
ensuring the desired probability of closed-loop chance-constraint satisfaction~\cite{bradford_stochastic_2020,bradford_hybrid_2021}.
Therein, 
a 
``forward-sampling''
~\cite{conti_gaussian_2009,umlauft_scenario-based_2018-1} 
strategy, 
based on reconditioning
the GP on previously sampled values,
has been employed 
to overcome the difficulty of sampling continuous functions from the GP directly,
which has also been used in~\cite{hewing_simulation_2020,lederer_confidence_2020, beckers_prediction_2021} for trajectory predictions with GPs.
However, 
the guarantees only hold for a fixed (distribution of) initial conditions and controller parameters.

To summarize,
existing
approximations of the 
reachable set associated with uncertain GP models 
either 
do not ensure
closed-loop constraint satisfaction,
or
are too conservative to be practically useful.

\subsubsection*{Contributions}
Toward addressing these challenges, 
we propose a GP-based 
MPC 
formulation that is robust against all dynamics in a confidence set, 
guaranteeing 
constraint satisfaction for the true system 
with high probability (Sec.~\ref{sec:GPMPC_prob}).

Additionally, we present an efficient sampling-based algorithm to approximately solve the derived robust GP-MPC problem based on sequential quadratic programming~(SQP). 
The proposed strategy is equivalent to solving an MPC problem with multiple dynamics sampled directly from the GP posterior, leading to a more accurate uncertainty propagation compared to \cite{koller_learning-based_2018,hewing_cautious_2020}.
Implementation is non-trivial since sampling the exact continuous function from GP is computationally intractable \cite{lin2024sampling}. 
To tackle this, we jointly model the unknown dynamics and its Jacobian using a multivariate GP and construct \emph{consistent} dynamics samples within the SQP framework by forward-sampling~\cite{hewing_simulation_2020, umlauft_scenario-based_2018-1,conti_gaussian_2009,lederer_confidence_2020, beckers_prediction_2021} across SQP iterations. 
The sampling process is parallelized across all samples and GP dimensions, resulting in efficient run time on GPUs. 
We discuss how to apply the algorithm efficiently in receding horizon using the real-time iteration~\cite{diehl_real-time_2005}, 
ensuring a maintainable computational footprint and similarity of the sampled dynamics across MPC iterations (\cref{sec:samplingGPMPC}). 

We highlight the effectiveness of sampling-based uncertainty propagation in comparison with sequential over-approximation~\cite{koller_learning-based_2018} and linearization-based~\cite{hewing_cautious_2020} approaches on a pendulum example. Lastly, we further demonstrate our method with a more realistic car model (\cref{sec:simulations}).

\section{Problem statement}

\looseness -1 We consider the task of state-feedback control for a discrete-time, nonlinear dynamical system 
\begin{align}
    \state(\ki+1) = \dynNom(\state(\ki), \coninput(\ki)) + \gpsubspace \dynGPtrue(\state(\ki), \coninput(\ki)) \label{eq:system_dyn}
\end{align}
with state 
$\state(\ki) \in \R^\statedim$ and input 
$\coninput(\ki) \in \R^\inputdim$. 
Here, 
\mbox{$\dynNom: \mathbb{R}^{\statedim} \times \mathbb{R}^{\inputdim} \rightarrow \mathbb{R}^{\statedim}$}
denotes the known part of the true dynamics, e.g., 
derived from first principles, 
and 
\mbox{$\dynGPtrue: \mathbb{R}^{\statedim} \times \mathbb{R}^{\inputdim} \rightarrow \mathbb{R}^{\gpdim}$},
the unknown part to be estimated from data,
modeled in a subspace defined by
\mbox{$\gpsubspace \in \mathbb{R}^{\statedim \times \gpdim}$}
with full column-rank.
The data consists of 
$\datasetdim \in \mathbb{N}$ 
noisy
measurements of the one-step prediction error
\begin{align}
    \label{eq:measurements}
    y_k \doteq \gpsubspace^\dagger (\state(\ki+1) - \dynNom(\state(\ki),\coninput(\ki))) + \epsilon(\ki),
\end{align}
where 
$\epsilon(\ki)$ 
is 
independent and identically distributed (i.i.d.) Gaussian measurement noise with covariance 
\mbox{$\Lambda \doteq \gpmeasvar^2 I_{n_g}$},
\mbox{$\lambda > 0$},
and $(\cdot)^\dagger$ denotes the Moore-Penrose pseudo-inverse.
For notational simplicity, and without loss of generality, the results in this paper are presented for a scalar unknown dynamics component,
i.e., 
\mbox{$\gpdim = 1$}; 
see~\cref{remark:multidim_gp} 
for an 
extension
to the multivariate case.

The main objective is 
the design of a model predictive control~(MPC) strategy
that ensures 
satisfaction of
safety constraints
\begin{align}
    (x(k), u(k)) \in \mathcal{Z} \doteq \left\{ (x,u) \> \middle| \>  \constraint(\state, \coninput) \leq 0 \right\}
    \label{eq:constraints_path}
\end{align}
at all times
{$k \in \mathbb{N}$},
despite the 
epistemic 
uncertainty 
in 
$\dynGP$.

We make the following 
regularity assumption. 

\begin{assumption}[Regularity \cite{chowdhury_kernelized_2017}]
    \label{assump:q_RKHS}
    The uncertain dynamics~$\dynGPtrue$
    is an element of the
    Reproducing Kernel Hilbert Space~(RKHS)~$\mathcal{H}_k$
    associated with the continuous, positive definite kernel 
    \mbox{$\kernelfunc: 
    \mathbb{R}^{n_z} \times \mathbb{R}^{n_z} 
    \rightarrow \mathbb{R}$}, 
    with a bounded RKHS norm,
    i.e., 
    \mbox{$\dynGPtrue \in \mathcal{H}_k$ }
    with 
    \mbox{$\|\dynGPtrue\|_{\mathcal{H}_k} \leq \Bg < \infty$}. 
\end{assumption}

To obtain a data-driven model and uncertainty quantification of the unknown dynamics~$\dynGPtrue$, Gaussian process~(GP) regression is employed.
Denote by 
\mbox{$\bar{\mathcal{D}} \doteq \{ Z, Y \}$}
the
data set,
where
\mbox{$Z = \begin{bmatrix} z_1, \ldots, z_\datasetdim \end{bmatrix}$},
\mbox{$z_k \doteq (\state(\ki), \coninput(\ki)) \in \mathbb{R}^{n_z}$},
are the
training inputs
with 
\mbox{$n_\gpinp = \statedim + \inputdim$};
\mbox{$Y = \begin{bmatrix} y_1, \ldots, y_\datasetdim \end{bmatrix}$},
the 
training targets; and
\mbox{$Z_\star = \begin{bmatrix} z_{1}^\star, \ldots, z_{\testinputdim}^\star \end{bmatrix}$},
the test inputs.
Then,
the posterior mean and covariance are given as 
\begin{align}
    \mu(Z_\star) &= \kernelfunc(Z_\star, Z) 
    \tilde{K}_{ZZ}^{-1}
    Y, \\
    \gppostvar^2(Z_\star, Z'_\star) &= \kernelfunc(Z_\star, Z'_\star) - \kernelfunc(Z_\star, Z)
    \tilde{K}_{ZZ}^{-1}
    \kernelfunc(Z, Z'_\star), 
    \label{eq:GP_posterior_covariance}
\end{align} 
respectively,
where 
$\left[ \kernelfunc(Z,Z') \right]_{ij} \doteq \kernelfunc(z_i, z'_j)$ for any matrices $Z, Z'$ with respective column vectors $z_i, z'_j \in \mathbb{R}^{n_z}$,
and  
$\tilde{K}_{ZZ} \doteq k(Z,Z) + \gpmeasvar^2 I_{\datasetdim}$.

Assumption~\ref{assump:q_RKHS} 
enables
the following high-probability bounds 
for the unknown part of the dynamics,
which has also been utilized in prior work for robust GP-MPC~\cite{koller_learning-based_2018}.

\begin{lemma}
    {\upshape (\cite[Theorem 2]{chowdhury_kernelized_2017})}
    \label{thm: beta} %
    \looseness -1 
    Let 
    \cref{assump:q_RKHS} 
    hold
    and let
    $\sqrt{\betaconst} = \Bg + 4 \gpmeasvar \sqrt{\gamma_\datasetdim + 1+ \log(1/\prob)}$, 
    where 
    $\gamma_\datasetdim$ 
    is the maximum information gain as defined in~\cite[Sec.~3.1]{chowdhury_kernelized_2017}.
    Then, 
    for all
    \mbox{$\gpinp \in \mathcal{Z}$},
    with probability at least $1-\prob$,
    it holds that
    \begin{align*}
        |\dynGPtrue(\gpinp) - \muconst(\gpinp)| \leq \sqrt{\betaconst} \gppostvar (\gpinp).
    \end{align*}
\end{lemma}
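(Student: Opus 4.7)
The plan is to follow the standard decomposition-plus-self-normalization argument from the kernelized bandit literature, i.e., the RKHS analogue of the Abbasi-Yadkori--P\'al--Szepesv\'ari self-normalized tail inequality. I work in the RKHS: write $k(z,z') = \langle \phi(z), \phi(z')\rangle_{\mathcal{H}_k}$ for the canonical feature map, collect features into $\Phi = [\phi(z_1), \dots, \phi(z_\datasetdim)]$, stack the noise into a vector $E$, and define the regularized covariance operator $V \doteq \gpmeasvar^2 I + \Phi \Phi^\top$ on $\mathcal{H}_k$. Under the observation model~\eqref{eq:measurements}, $Y = \Phi^\top \dynGPtrue + E$, and a short rewrite recasts the posterior mean as $\mu(z) = \phi(z)^\top V^{-1} \Phi Y$, giving the clean decomposition
\[
\dynGPtrue(z) - \mu(z) = \gpmeasvar^2 \phi(z)^\top V^{-1} \dynGPtrue - \phi(z)^\top V^{-1} \Phi E.
\]

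Applying Cauchy--Schwarz in the $V^{-1}$-weighted inner product yields
\[
|\dynGPtrue(z) - \mu(z)| \le \|\phi(z)\|_{V^{-1}}\bigl( \gpmeasvar^2 \|\dynGPtrue\|_{V^{-1}} + \|\Phi E\|_{V^{-1}} \bigr).
\]
Two elementary identifications clean this up: a Schur-complement / Woodbury calculation gives $\|\phi(z)\|_{V^{-1}} = \gppostvar(z)/\gpmeasvar$, and $V \succeq \gpmeasvar^2 I$ implies $\gpmeasvar^2 \|\dynGPtrue\|_{V^{-1}} \le \gpmeasvar \|\dynGPtrue\|_{\mathcal{H}_k} \le \gpmeasvar \rkhsbound$ by \cref{assump:q_RKHS}.

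The main obstacle is controlling the noise term $\|\Phi E\|_{V^{-1}} = \|\sum_i \phi(z_i) \epsilon(i)\|_{V^{-1}}$, a Hilbert-space-valued self-normalized martingale (the $\phi(z_i)$ may be chosen adaptively, hence are $\mathcal{F}_{i-1}$-measurable, while $\epsilon(i)$ is conditionally $\gpmeasvar$-sub-Gaussian). The classical method-of-mixtures / Laplace argument in its operator-valued form gives, with probability at least $1-\prob$,
\[
\|\Phi E\|_{V^{-1}}^2 \le 2\gpmeasvar^2 \log\!\Bigl( \det(V)^{1/2} \det(\gpmeasvar^2 I)^{-1/2} / \prob \Bigr),
\]
and the log-determinant identity $\log\det(I + \gpmeasvar^{-2} k(Z,Z)) \le 2\gamma_\datasetdim$ from the definition of the maximum information gain yields $\|\Phi E\|_{V^{-1}} \le \gpmeasvar\sqrt{2(\gamma_\datasetdim + \log(1/\prob))}$. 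Assembling the pieces gives $|\dynGPtrue(z) - \mu(z)| \le \gppostvar(z) \bigl( \rkhsbound + \gpmeasvar\sqrt{2(\gamma_\datasetdim + \log(1/\prob))} \bigr)$; loosening the $\sqrt{2}$ and absorbing the additive $+1$ into the factor $4$ recovers the stated $\sqrt{\betaconst} = \rkhsbound + 4\gpmeasvar\sqrt{\gamma_\datasetdim + 1 + \log(1/\prob)}$. The infinite-dimensional self-normalized bound is the only nontrivial ingredient; once it is granted, the rest is linear algebra.
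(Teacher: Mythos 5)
The paper offers no proof of this lemma---it is imported verbatim as \cite[Theorem~2]{chowdhury_kernelized_2017}---so your reconstruction via the feature-space decomposition and the operator-valued self-normalized martingale bound is a legitimate and standard route, and your intermediate identities are all correct: the decomposition $\dynGPtrue(z)-\muconst(z)=\gpmeasvar^2\phi(z)^\top V^{-1}\dynGPtrue-\phi(z)^\top V^{-1}\Phi E$, the identity $\|\phi(z)\|_{V^{-1}}=\gppostvar(z)/\gpmeasvar$, the bound $\gpmeasvar^2\|\dynGPtrue\|_{V^{-1}}\le\gpmeasvar\rkhsbound$, and the log-determinant/information-gain step all check out.

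The gap is in the final assembly. From your own steps, the noise contribution is $\|\phi(z)\|_{V^{-1}}\cdot\|\Phi E\|_{V^{-1}}\le\bigl(\gppostvar(z)/\gpmeasvar\bigr)\cdot\gpmeasvar\sqrt{2(\gamma_\datasetdim+\log(1/\prob))}=\gppostvar(z)\sqrt{2(\gamma_\datasetdim+\log(1/\prob))}$: the two factors of $\gpmeasvar$ cancel. Your assembled bound nevertheless reads $\gppostvar(z)\bigl(\rkhsbound+\gpmeasvar\sqrt{2(\cdots)}\bigr)$, silently re-inserting a $\gpmeasvar$. This is not cosmetic, because the target constant is $\rkhsbound+4\gpmeasvar\sqrt{\gamma_\datasetdim+1+\log(1/\prob)}$: the implication $\sqrt{2(\gamma_\datasetdim+\log(1/\prob))}\le 4\gpmeasvar\sqrt{\gamma_\datasetdim+1+\log(1/\prob)}$ fails whenever $\gpmeasvar$ is small (the paper uses $\gpmeasvar^2=10^{-6}$), so what you actually proved does not yield the lemma as stated. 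The discrepancy traces to a conventions mismatch: in Chowdhury--Gopalan, $\gppostvar$ is computed with (approximately) \emph{unit} regularization of the kernel matrix while the noise is $R$-sub-Gaussian, which is why $R$ ($=\gpmeasvar$ here) appears multiplicatively in their $\beta$; their proof handles this decoupling with an augmented-kernel construction (the source of the $+1$ under the root), not the plain Abbasi-Yadkori argument you use. With the paper's $\gpmeasvar^2$-regularized posterior, your argument gives the (perfectly valid) constant $\rkhsbound+\sqrt{2(\gamma_\datasetdim+\log(1/\prob))}$; to recover the stated form you would need to either work with the unit-regularized variance and relate it to the paper's $\gppostvar$, or acknowledge that the constant you obtain is a different (for small $\gpmeasvar$, larger) one.
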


Using 
\cref{thm: beta},
we can construct high-probability upper and lower confidence bounds 
\mbox{$\lbdyn(\gpinp) \doteq \mu(\gpinp) - \sqrt{\beta} \sigma(z)$},
\mbox{$\ubdyn(\gpinp) \doteq \mu(\gpinp) + \sqrt{\beta} \sigma(z)$},
respectively,
such that we can guarantee that the unknown function $\dynGPtrue$ is 
contained in the set
\begin{align}
    \dynSet \coloneqq \left\{ \dynGP \> | \> \lbdyn(\gpinp) \leq \dynGP(\gpinp) \leq \ubdyn(\gpinp) \>  
    \forall \gpinp \in \mathcal{Z} 
    \right\} 
\end{align}
with probability at least $1 - p$, i.e.,
\mbox{$\mathrm{Pr}(\dynGPtrue \in \dynSet) \geq 1-p$}.

To this end, 
we denote by
$\GPtrunc[GP](0,\kernelfunc;\bar{\mathcal{D}})$
the truncated distribution of functions 
\mbox{$g \sim \mathcal{GP}(0,\kernelfunc;\bar{\mathcal{D}})$}
that are contained
in the set 
\mbox{$g \in \dynSet$}.
For a finite number of inputs~\mbox{$Z_\star \in \mathbb{R}^{n_\gpinp \times \testinputdim}$}, 
the values~$G_\star$ of a function 
\mbox{$g \sim \GPtrunc[GP](0,\kernelfunc;\bar{\mathcal{D}})$} 
are thus distributed according
to the $\testinputdim$-dimensional \emph{truncated} Gaussian distribution
$\GPtrunc[N](G_\star; \mu(Z_\star), \Sigma(Z_\star))$
on the hyper-rectangle
\mbox{$[\lbdyn(z_1^\star), \ubdyn(z_1^\star)] \times \ldots \times [\lbdyn(z_{\testinputdim}^\star), \ubdyn(z_{\testinputdim}^\star)]$}.

\begin{remark}
    The measurement noise assumption can be relaxed to conditionally $\sigma$-sub-Gaussian noise, see, e.g.,~\cite{chowdhury_kernelized_2017}.
\end{remark}

\begin{remark}
    \label{remark:multidim_gp}
    Note that 
    \cref{assump:q_RKHS} and \cref{thm: beta}
    may easily be transferred to 
    the multidimensional case
    when
    each component of~$\dynGP$ is modeled as an independent, single-dimensional GP.
\end{remark}

\begin{remark}
    In this work, we focus on epistemic uncertainty introduced by the missing knowledge about g.
    As such, we consider noise-free true dynamics~\eqref{eq:system_dyn} with exact state measurements, 
    while allowing for measurement noise in the data set~$\bar{\mathcal{D}}$ collected offline.
    For the special case of noise-free measurements,
    tighter bounds than \cref{thm: beta} for the unknown dynamics~$\dynGP$ may be employed, see, e.g., 
    \cite[Sec.~14.1]{fasshauer_kernel-based_2015}.
\end{remark}

\section{Robust GP-MPC problem}

\label{sec:GPMPC_prob}
In this section, we formulate a robust GP-MPC problem using the derived dynamics 
distribution,
which provides safety guarantees for the unknown system. The 
proposed
MPC problem at time~$k \in \mathbb{N}$
is given by
\begin{subequations} \label{eq:SafeGPMPCinf}
    \begin{align}
        \underset{\bm{u}}{\min} \quad & 
        \mathbb{E}_{g \sim \GPtrunc[GP](0,\kernelfunc;\bar{\mathcal{D}})}
        \Bigg[ 
        \sum_{i=0}^{\horizon-1} \cost_i(\state^\dynGP_i, \coninput_i )  \Bigg] 
        \label{eq:cost}
        \\
        \mathrm{s.t.} \quad &\forall i \in \{0,\ldots,\horizon-1\}, 
        \forall g \sim \GPtrunc[GP](0,\kernelfunc;\bar{\mathcal{D}}),
        \label{eq:SafeGPMPCinf_forall} \\
        & \state^\dynGP_0 = \state(k), \label{eqn:initialization}\\
        & \state_{i+1}^\dynGP = \dynNom(\state^\dynGP_i, \coninput_i) + \gpsubspace \dynGP(\state^\dynGP_i,\coninput_i), \label{eqn:pred_dyn}\\
        & (\state^\dynGP_i,\coninput_i) \in \mathcal{Z} \label{eqn:state_input_constraint}, \\
        & \state^\dynGP_\horizon \in \mathcal{Z}_{\mathrm{f}}. \label{eqn:terminal_constraint}
    \end{align}
\end{subequations}
\looseness -1 Here, \cref{eqn:pred_dyn} yields a predicted 
state sequence
\mbox{$\bm{\state}^\dynGP = (\state^\dynGP_0, \ldots, \state^\dynGP_\horizon)$} 
for 
any
uncertain
dynamics instance
\mbox{$\dynGP \sim \GPtrunc[GP](0, \kernelfunc; \bar{\mathcal{D}})$}
under 
the same, shared
control sequence 
\mbox{$\bm{\coninput} = (\coninput_0, \ldots, \coninput_{\horizon - 1})$}
along the prediction horizon~$\horizon$.
\cref{eqn:state_input_constraint} ensures that all the predicted sequences satisfy the joint state and input constraints~\eqref{eq:constraints_path}. 
The constraint in \cref{eqn:initialization} initializes all the trajectories to the system state at time $\ki$, denoted by $\state(\ki)$, and \cref{eqn:terminal_constraint} ensures that all the predicted trajectories  
reach the
terminal set 
\begin{align}
    \mathcal{Z}_{\mathrm{f}} \doteq \left\{ \state \> \middle| \>  \constraint_\horizon(\state) \leq 0 \right\}
\end{align}
at the end of the 
horizon.
The 
cost in~\cref{eq:cost}  
reflects the
expected
finite-horizon 
cost over functions drawn from 
\mbox{$g \sim \GPtrunc[GP](0,\kernelfunc;\bar{\mathcal{D}})$}, 
with stage cost 
\mbox{$l_i: \mathbb{R}^{n_\state} \times \mathbb{R}^{n_\coninput} \rightarrow \mathbb{R}$}. 
Note that a terminal penalty 
can also be included in the cost.

Next, we investigate the theoretical guarantees of the proposed robust GP-MPC problem~\eqref{eq:SafeGPMPCinf}. In order to address constraint satisfaction for infinite time we make the following standard assumption on the terminal invariant set.
\begin{assumption}[Robust positive invariant set] \label{assum:safeset}
The terminal set $\mathcal{Z}_{\mathrm{f}}$ is a robust positive invariant set for system~\eqref{eq:system_dyn} under the input $\coninput_{\mathrm{f}}\in\mathbb{R}^m$, i.e., $\forall \dynGP \in \dynSet$, $\state \in \mathcal{Z}_{\mathrm{f}}$: \mbox{$(x,\coninput_{\mathrm{f}}) \in \mathcal{Z}$}
and \mbox{$\dynNom(\state, \coninput_{\mathrm{f}}) + \gpsubspace \dynGP(\state, \coninput_{\mathrm{f}}) \in \mathcal{Z}_{\mathrm{f}}$}.
\end{assumption}
Similar to standard 
MPC
designs~\cite[Sec.~2.5.5]{rawlingsModelPredictiveControl2020}, 
this condition can,
e.g., 
be satisfied by constructing $\mathcal{Z}_{\mathrm{f}}$ based on a local Lyapunov function around the origin.%
\footnote{%
    In particular, this requires that the origin is an equilibrium point $\forall \dynGP \in\dynSet$, i.e., $f(0,0)+B_d \dynGP(0,0)=0$, and that the linearizations at the origin are open-loop stable with a common Lyapunov function. 
    The assumption of open-loop stability can be relaxed by using a feedback $u(x)=\kappa(x)+v$ in the predictions in~\eqref{eq:SafeGPMPCinf} instead of open-loop predictions (cf., e.g.,~\cite{chisci2001systems}).%
}
The following theorem guarantees 
constraint satisfaction by applying the optimal control sequence
obtained by solving problem~\eqref{eq:SafeGPMPCinf}. 

\begin{theorem} \label{thm:closedloop_properties} 
    Let \cref{assump:q_RKHS,assum:safeset} hold.
    Let
    Problem~\eqref{eq:SafeGPMPCinf} 
    be 
    feasible at 
    $\ki=0$
    and 
    $\bm{\coninput}^\star = \{\coninput_0^\star,\dots, \coninput_{\horizon-1}^\star\}$
    be the corresponding optimal control sequence.
    Then,
    with probability at least $1-\prob$, 
    applying the control sequence
    \begin{align*}
        u(k) = \begin{cases}
            \coninput^\star_k, \quad k \in \{0, \ldots, \horizon - 1 \}\\
            u_{\mathrm{f}}, \quad k \geq \horizon
        \end{cases}
    \end{align*}
    to
    the system~\eqref{eq:system_dyn} 
    leads to constraint satisfaction for all times,
    i.e.,
    $(\state(\ki), \coninput(\ki)) \in \mathcal{Z}$ ~$\forall k\in\mathbb{N}$.
\end{theorem} 
\begin{proof} 
    First, note that $\dynGP^{\mathrm{tr}} \in \dynSet$ with probability at least $1-\prob$ due to \cref{thm: beta}. 
    Thus, for $\dynGP^{\mathrm{tr}} \in \dynSet$ and
    \mbox{$k \in \{0, \ldots, \horizon - 1 \}$},
    constraint satisfaction directly follows from feasibility of Problem \eqref{eq:SafeGPMPCinf}, 
    for $\ki \geq \horizon$, 
    from \cref{assum:safeset}.
\end{proof}

While Problem~\eqref{eq:SafeGPMPCinf} is in general not recursively feasible, closed-loop constraint satisfaction can be ensured using an implementation strategy similar to~\cite[Alg.~1]{wabersich_linear_2018}.
In the remainder of this paper,
we focus on a computationally efficient, sampling-based approximation of~\eqref{eq:SafeGPMPCinf};
further closed-loop considerations are left for future work.

\section{Efficient sampling-based GP-MPC}
\label{sec:samplingGPMPC}

Solving problem~\eqref{eq:SafeGPMPCinf} is intractable due to the infinite number of dynamics functions in the set. 
In this section,
we obtain a 
tractable 
approximation
by solving it using a finite number
of functions. 
To solve 
the nonlinear program iteratively,
we employ SQP
(\mbox{\cref{sec:SQP_basics}}).
During optimization, 
the
required
\emph{function and derivative} values 
are 
thereby 
provided by
modeling
the unknown function~$\dynGPtrue$ and its Jacobian \emph{jointly}, using a multivariate GP with a derivative kernel~(\mbox{\cref{sec:SQP_derivative_GP}}).
As it is generally not possible to obtain analytical expressions for the exact infinite-dimensional GP sample paths, 
we utilize a sequential GP sampling approach~\cite{conti_gaussian_2009,umlauft_scenario-based_2018-1}, which is based on conditioning the GP on previously sampled values, and integrate it into 
the SQP
framework~(\mbox{\cref{sec:SQP_sequential_sampling}}). 
Finally, we discuss the properties of the proposed SQP algorithm, as well as its 
efficient
closed-loop implementation~(\mbox{\cref{sec:discussion}}).

\subsection{Sequential Quadratic Programming} \label{sec:SQP_basics}

In the following, we describe how to solve~\eqref{eq:SafeGPMPCinf}
for a finite number of $N$~samples using SQP~\cite[Sec.~18]{nocedal_numerical_2006}.
Therefore, let
the functions $f,\dynGPtrue$, $h$, $h_H$, $l_i$ and $k$ be twice continuously differentiable.
A local optimizer 
can be 
obtained by 
iteratively 
approximating
the nonlinear program~(NLP)
with a quadratic program~(QP) 
\begin{subequations} \label{eq:SafeGPMPCsqp}
    \begin{align}
        \underset{\Delta \bm{\coninput}, \Delta \bm{\state}}{\min} \quad & \frac{1}{\numsamples} \sum_{n=1}^{\numsamples} \sum_{i=0}^{\horizon-1} 
        \begin{bmatrix}
            \Delta \state^\n_i \\ \Delta \coninput_i
        \end{bmatrix}^\top
        \hessian^\n_i
        \begin{bmatrix}
            \Delta \state^\n_i \\ \Delta \coninput_i
        \end{bmatrix} 
        +
        (q_{i}^\n)^\top
        \begin{bmatrix}
            \Delta \state^\n_i \\ \Delta \coninput_i
        \end{bmatrix} \nonumber \\
        \mathrm{s.t.} \quad &\forall i \in \{0,\ldots,\horizon-1\}, 
        \forall \n \in \{1,\ldots,\numsamples \}, 
        \label{eq:SQP_cost} 
        \\
        & \Delta \state^\n_0 = 0, \\
        & \begin{aligned} 
            \Delta \state^\n_{i+1} = \> & f(\hat{\state}^\n_i,\hat{\coninput}_i) + B_d \dynGP^\n(\hat{\state}^\n_i,\hat{\coninput}_i) - \hat{\state}^\n_{i+1} \\&+ \hat{A}^\n_i \Delta \state^\n_i + \hat{B}^\n_i \Delta \coninput_i,
        \end{aligned} \\
        & 0 \geq h(\hat{\state}^\n_i,\hat{\coninput}_i) + \hat{H}^\state_{x,i} \Delta \state^\n_i + \hat{H}^\coninput_{u,i} \Delta u^\n_i, \\
        & 0 \geq h_\horizon(\hat{\state}^\n_\horizon) + \hat{H}^\state_{x,\horizon} \Delta \state^\n_\horizon.
    \end{align}
\end{subequations}
Thereby, $\hessian_i^\n$ denotes a positive definite approximation of the Hessian of the Lagrangian, $q^\n_i$, the cost gradient, and
\begin{align}
    \hat{A}^\n_i &= \frac{\partial (f + B_d g^\n)}{\partial x}, 
    \quad
    \hat{B}^\n_i = \frac{\partial (f + B_d g^\n)}{\partial u} \label{eq:SQP_ABk} \\ 
    \hat{H}^\n_{\state,i} &= \frac{\partial h}{\partial \state}, 
    \quad
    \hat{H}^\n_{\coninput,i} = \frac{\partial h}{\partial \coninput},
    \quad
    \hat{H}^\n_{\state,\horizon} = \frac{\partial h_\horizon}{\partial \state},
\end{align}
the Jacobians of the dynamics and other (in-)equality constraints with respect to the states and inputs, respectively,
each evaluated at \mbox{$(\hat{\state}^\n_i, \hat{\coninput}_i)$}.
The solution of~\eqref{eq:SafeGPMPCsqp}
in terms of 
inputs
\mbox{$\Delta \bm{\coninput} = (\Delta \coninput_0, \ldots, \Delta \coninput_{\horizon - 1})$}
and states \mbox{$\Delta \bm{\state} = \left\{ \Delta \bm{\state}^\n \right\}_{n=1}^\numsamples$}, with 
\mbox{$\Delta \bm{\state}^\n = (\Delta \state^\n_0, \ldots, \Delta \state^\n_\horizon)$}, 
is then used to update
the current solution estimate
\mbox{$\bm{\hat{\state}}^\n = (\hat{\state}^\n_0, \ldots, \hat{\state}^\n_\horizon)$}, 
\mbox{$\bm{\hat{\coninput}} = (\hat{\coninput}_0, \ldots, \hat{\coninput}_{\horizon - 1})$},
for the next iteration, 
\mbox{$\bm{\hat{\state}}^\n \leftarrow \bm{\hat{\state}}^\n + \Delta \bm{\state}^\n$}, 
\mbox{$\bm{\hat{\coninput}} \leftarrow \bm{\hat{\coninput}} + \Delta \bm{\coninput}$},
until convergence.

Evaluation of~\eqref{eq:SQP_ABk}
requires
obtaining the values of
the sampled functions~$\dynGP^\n$ and their respective Jacobians, 
i.e.,
\begin{align}
    \dynGP^\n(\hat{\state}^\n_i,\hat{\coninput}_i), && \frac{\partial g^\n}{\partial x} (\hat{\state}^\n_i,\hat{\coninput}_i), && \frac{\partial g^\n}{\partial u} (\hat{\state}^\n_i,\hat{\coninput}_i), \label{eq:SQP_needed_g_gradients}
\end{align}
at 
the
current 
solution estimate
\mbox{$(\hat{\state}^\n_i, \hat{\coninput}_i)$}. 
In the following, we discuss how to efficiently draw these function and Jacobian values jointly from a Gaussian process.

\subsection{Gaussian processes with derivatives} \label{sec:SQP_derivative_GP}

To sample function values and derivatives jointly that are consistent with sample paths of 
\mbox{$\GPtrunc[GP](0,\kernelfunc;\bar{\mathcal{D}})$},
we model the function~$\dynGPtrue$ and its spatial derivatives as a multi-output GP, with the matrix-valued kernel function \mbox{$\kernelfuncjoint: \mathbb{R}^{n_z} \times \mathbb{R}^{n_z} \rightarrow \mathbb{R}^{(n_x + 1) \times (n_x + 1)}$},
\begin{align}
    \kernelfuncjoint(\gpinp_a,\gpinp_b) = \begin{bmatrix}
        \kernelfunc(\gpinp_a,\gpinp_b) & \frac{\partial \kernelfunc(\gpinp_a,\gpinp_b)}{\partial \gpinp_a} \\
        \frac{\partial \kernelfunc(\gpinp_a,\gpinp_b)}{\partial \gpinp_b}^\top & \frac{\partial^2 \kernelfunc(\gpinp_a,\gpinp_b)}{\partial \gpinp_a \partial \gpinp_b}
    \end{bmatrix},
\end{align}
see, e.g.,~\cite[Sec.~9.4]{rasmussen_gaussian_2006}.
To simplify notation, 
note that we have dropped
the sample index $(\cdot)^\n$
in both input and output arguments.

Given the initial data set~$\bar{\mathcal{D}}$ in terms of only the function values and no derivative measurements, 
the GP with derivatives is conditioned on partial measurements.
The inference formulas are easily derived starting from
the standard GP regression case of
full function-value and derivative measurements, 
see, e.g.~\cite[Sec.~2]{rasmussen_gaussian_2006}:
Let 
the partial data 
set 
be obtained by
a linear projection
with a data selection matrix \mbox{$P \in \mathbb{N}^{(n_\state+1)\datasetdim \times \datasetdim}$},
such that 
\mbox{$Y = P^\top Y_{\mathrm{d}}$},
where
\mbox{$Y_{\mathrm{d}} \in \mathbb{R}^{(n_\state+1)\datasetdim}$}
is the (unavailable) data set covering all output dimensions 
(including derivative data for~$\dynGPtrue$).
Then, 
the posterior mean and covariance of
$\mathcal{GP}(0,\kernelfuncjoint)$ conditioned on $Y$,
evaluated at $\testinputdim$ test inputs
\mbox{$Z_\star \in \mathbb{R}^{n_\gpinp \times \testinputdim}$},
\mbox{$Z_\star = \begin{bmatrix} z_1, \ldots, z_{\testinputdim} \end{bmatrix}$},
are given as
\begin{align}
    \mujoint(Z_\star) &= \kernelfuncjoint(Z_\star, Z) P 
    \tilde{K}_{ZZ}^{-1} 
    Y, \\
    \Sigmajoint(Z_\star, Z'_\star) &= \ldots \\
    & \hspace{-0.9cm} \kernelfuncjoint(Z_\star, Z'_\star) - \kernelfuncjoint(Z_\star, Z) P
    \tilde{K}_{ZZ}^{-1} 
    P^\top \kernelfuncjoint(Z, Z'_\star), \notag
\end{align} 
see, e.g.,~\cite[Sec.~A.2]{rasmussen_gaussian_2006}, where $\left[ \kernelfuncjoint(Z,Z') \right]_{ij} \doteq \kernelfuncjoint(z_i, z'_j)$ for any matrices $Z, Z'$ with respective column vectors 
\mbox{$z_i, z'_j \in \mathbb{R}^{n_z}$},
and  
\mbox{$\tilde{K}_{ZZ} = P^\top \kernelfuncjoint(Z,Z) P  + \gpmeasvar^2 I_\datasetdim$} 
is the 
same
covariance matrix 
as in~\cref{eq:GP_posterior_covariance}.
Note that
the 
posterior mean~$\mu(Z_\star)$ and covariance~$\Sigma(Z_\star)$ of the original GP are 
recovered by projecting
\mbox{$\mu(Z_\star) = P_\star^\top \mujoint(Z_\star)$}, 
\mbox{$\Sigma(Z_\star) = P_\star^\top \Sigmajoint(Z_\star) P_\star$}, 
respectively, 
with an output selection matrix 
\mbox{$P_\star \in \mathbb{N}^{n_\star (\statedim + 1) \times n_\star}$}; 
hence,
it is easily verified that
the predictions of the GP function \emph{values} with $\mathcal{GP}(0,\kernelfuncjoint)$ and $\mathcal{GP}(0,\kernelfunc)$, given only function-value measurements~$Y$, are identical.

\subsection{Sequential GP sampling} \label{sec:SQP_sequential_sampling}

Using the derivative kernel,
we can jointly draw consistent function and derivative samples 
from the GP posterior
by computing~\cite[Sec.~A.2]{rasmussen_gaussian_2006}
\begin{align}
    \begin{aligned}
        G_\star 
        &
        \doteq
            \left[ \ldots, \dynGP(z_i), \frac{\partial g}{\partial \gpinp} (z_i), \ldots \right]^\top 
            \\
        &=
        \mujoint(Z_\star)
        + 
        \sqrt{
        \Sigmajoint
        (Z_\star, Z_\star)
        }
        \eta,    
    \end{aligned}
    \label{eq:GP_sampling}
\end{align}
where 
\mbox{$\eta \sim \mathcal{N}(0,I_{\testinputdim})$}
and
$\sqrt{\Sigma(Z_\star, Z_\star)}$ denotes the square-root of the GP posterior covariance matrix.
To ensure that the component corresponding to the function values 
is
sampled 
from the 
truncated Gaussian distribution~$\GPtrunc[N](\mu(Z_\star),\sigma^2(Z_\star);\bar{\mathcal{D}})$, 
we thereby discard all 
samples whose value-component 
violates
the confidence bounds in Lemma~\ref{thm: beta}
at any input location\footnotemark.
\footnotetext{%
    We 
    analogously
    denote by \mbox{$\GPtrunc[N](\mujoint(Z_\star), \Sigmajoint(Z_\star))$} the truncated Gaussian distribution for the function value and Jacobian, where only the value-component is truncated.
}

\begin{figure*}
    \centering
    \includegraphics*[trim=0 0 0 0, clip, width=1.0\textwidth]{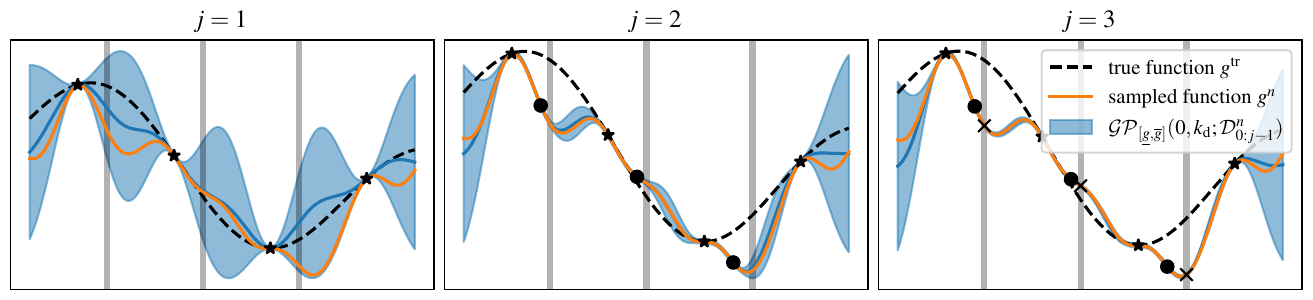}
    \caption{Conditioning on the past SQP iterations to construct a sample from GP. In each SQP iteration~$j$ in \cref{alg:sampling_gpmpc_sqp} (or forward-sampling iteration in \cref{prop:SQPsampling}), consistent function and derivative values of $g^\n$ are obtained by sampling from $\GPtrunc[GP](0, \kernelfuncjoint; \mathcal{D}^\n_{0:j-1})$.
    Initial training 
    points
    are
    denoted by black stars;
    sampled values at iteration $j \in \{1,2\}$,
    by 
    $\{ \text{circles, crosses} \}$, respectively; 
    sampling locations, by gray vertical bars.%
    }
    \label{fig:sequentialsampling}
\end{figure*}

Using~\eqref{eq:GP_sampling}, the values and derivatives of a function
\mbox{$g \sim \GPtrunc[GP](0,\kernelfunc,\bar{\mathcal{D}})$}
can be obtained at a known set of input locations. 
However, during the SQP algorithm, the same function sample is to be evaluated at different input locations, whose value is only known at each respective iteration of the algorithm.  
To ensure consistency of the sampled function values across all SQP iterations, 
we thus employ 
the iterative
sampling strategy 
proposed by~\cite[Sec.~3]{conti_gaussian_2009}.
The sampling strategy 
is based on 
a simple
equivalence relation,
allowing to 
sample 
$G_\star = \begin{bmatrix} \ldots, G_j, \ldots \end{bmatrix}$
from 
$p(G_\star | \bar{\mathcal{D}}, Z_\star)$
at all input locations
\mbox{$Z_\star = \begin{bmatrix} \ldots, Z_j, \ldots \end{bmatrix}$}
by iteratively sampling 
$G_j$
from the conditional distributions
$p(G_j | \mathcal{D}_{0:j-1}, Z_j)$,
where 
\mbox{$\mathcal{D}_0 \doteq \bar{\mathcal{D}}$} is the initial data set,
and the sets
\mbox{$\mathcal{D}_{j} = \left\{ G_{j}, Z_{j} \right\}$}, 
\mbox{$j = 1, \ldots, \numsqp$},
contain
the previously sampled values at every iteration;
see the illustration 
in \cref{fig:sequentialsampling}.
While this idea has also been applied in, e.g., \cite[Sec.~III.C]{umlauft_scenario-based_2018-1},~\cite[Sec.~IV.B]{lederer_confidence_2020},
for completeness, 
we state 
this equivalence
again
in the following lemma. 

\begin{lemma} {\upshape(cf. \cite[Property 1]{beckers_prediction_2021})}
    \label{prop:SQPsampling}
    Let 
    \mbox{$p(G_\star | \bar{\mathcal{D}}, Z_\star) = \GPtrunc(G_\star; \mu(Z_\star), \Sigma(Z_\star))$} 
    be the 
    truncated Gaussian posterior distribution
    of a Gaussian process $\mathcal{GP}(0, k; \mathcal{D}_0)$, 
    conditioned on the data set $\mathcal{D}_0 \doteq \bar{\mathcal{D}}$, 
    and evaluated at $\numsqp$ sets of input locations 
    $Z_j$, $j = 1, \ldots, \numsqp$, 
    with
    \mbox{$Z_\star = \begin{bmatrix} Z_1, \ldots, Z_\numsqp \end{bmatrix} \in \mathbb{R}^{n_z \times \horizon \numsqp}$}.
    It holds that 
    \begin{align}
        p( G_\star | \bar{\mathcal{D}}, Z_\star) = \prod_{j=1}^{\numsqp} p(G_j | \mathcal{D}_{0:j-1}, Z_j).
    \end{align}
\end{lemma}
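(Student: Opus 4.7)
The plan is to decompose the joint posterior density via the chain rule of probability and then identify each resulting conditional with the posterior obtained by augmenting the data set with the previously sampled values. Since \cref{prop:SQPsampling} is stated as an equality of densities, the proof should be a short probabilistic calculation rather than a geometric argument.

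First, I would apply the chain rule of conditional probability to write
\begin{align*}
    p(G_\star \mid \bar{\mathcal{D}}, Z_\star) = \prod_{j=1}^{\numsqp} p(G_j \mid G_{1:j-1}, \bar{\mathcal{D}}, Z_\star).
\end{align*}
Next, I would argue that for a Gaussian process the posterior conditioned on the original data~$\bar{\mathcal{D}}$ plus additional \emph{exact} function-value and Jacobian observations~$\{(G_i, Z_i)\}_{i=1}^{j-1}$ coincides with the posterior $\mathcal{GP}(0,\kernelfuncjoint;\mathcal{D}_{0:j-1})$ obtained by appending those observations to the data set, with $\mathcal{D}_0 \doteq \bar{\mathcal{D}}$. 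This follows from the standard Gaussian conditioning formulas already reviewed in \cref{sec:SQP_derivative_GP}. Moreover, the density $p(G_j \mid G_{1:j-1}, \bar{\mathcal{D}}, Z_\star)$ does not depend on the future query locations $Z_{j+1:\numsqp}$, since by the defining marginalization property of a GP, the marginal at $Z_j$ is independent of which other test inputs are considered jointly. Combining these two observations yields
\begin{align*}
    p(G_j \mid G_{1:j-1}, \bar{\mathcal{D}}, Z_\star) = p(G_j \mid \mathcal{D}_{0:j-1}, Z_j),
\end{align*}
and substituting into the chain-rule expansion gives the claim.

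The one subtlety is the truncation: the statement replaces the Gaussian density with its restriction to the hyper-rectangle $\prod_j [\lbdyn(z^\star_j), \ubdyn(z^\star_j)]$ on the value-components. The key observation is that this truncation region is a Cartesian product over the individual input locations, so the indicator of the joint constraint factorizes as a product of indicators $\prod_j \mathbbm{1}\{G_j^{\text{val}} \in [\lbdyn(Z_j), \ubdyn(Z_j)]\}$, where $G_j^{\text{val}}$ denotes the value-component of $G_j$. Accordingly, the truncation normalization constant factors into conditional normalizations along the chain rule, and each factor $p(G_j \mid \mathcal{D}_{0:j-1}, Z_j)$ is precisely the truncated conditional Gaussian posterior, matching the notation $\GPtrunc[GP](0, \kernelfuncjoint; \mathcal{D}_{0:j-1})$ evaluated at $Z_j$.

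The main obstacle I anticipate is bookkeeping rather than conceptual: carefully distinguishing between value-components and Jacobian-components of the joint samples $G_j$ when stating the truncation (only the former is truncated), and verifying that conditioning on exact observations of both values and derivatives is well-defined under the derivative kernel $\kernelfuncjoint$. The latter amounts to invoking positive-definiteness of the Gram matrix built from $\kernelfuncjoint$ at the augmented input set, which is standard for the kernels used in practice.
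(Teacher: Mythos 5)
Your proof follows essentially the same route as the paper's: a chain-rule factorization of the joint posterior, combined with the observation that conditioning on $\bar{\mathcal{D}}$ together with the previously sampled pairs $\{(Z_i,G_i)\}_{i<j}$ is identical to conditioning the GP on the augmented data set $\mathcal{D}_{0:j-1}$, and that the resulting conditional at $Z_j$ does not depend on the yet-unqueried locations $Z_{j+1:\numsqp}$. The paper's proof is exactly this one-line chain-rule computation, so the core of your argument matches. The one place where you go beyond the paper---and slightly overreach---is the truncation: the claim that the normalization constant of the joint truncated Gaussian factors into per-step truncation constants along the chain rule is not exact, because the conditional probability that the remaining value-components fall inside their intervals depends on the already-sampled values, so the ratio of successive normalizers is not constant in $G_j$. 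The stated identity is unproblematic only when each factor $p(G_j\mid\mathcal{D}_{0:j-1},Z_j)$ is read as the \emph{true} conditional of the joint truncated law (which is how the paper's proof implicitly reads it), rather than as an independently re-truncated conditional Gaussian; your bookkeeping of value- versus Jacobian-components and the positive-definiteness remark are otherwise fine.
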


\begin{proof}
    By the definition of the conditional probability distribution,
    it follows that the joint likelihood can be factorized, i.e., 
    \begin{align*}
        p( G_\star | \mathcal{D}_0, Z_\star) &= p(G_{2:\numsqp} | \mathcal{D}_0, G_1, Z_\star) p(G_1 | \mathcal{D}_0, Z_\star) \\
        &= p(G_{2:\numsqp} | \mathcal{D}_{0:1}, Z_{2:\numsqp}) p(G_1 | \mathcal{D}_0, Z_1) \\
        &= \ldots = \prod_{j=1}^{\numsqp} p(G_j | \mathcal{D}_{0:j-1}, Z_j). \\[-7ex]
    \end{align*}
\end{proof}

\subsection{Discussion of Sampling-based GP-MPC}
\label{sec:discussion}

The sampling-based SQP algorithm is summarized in 
\cref{alg:sampling_gpmpc_sqp}.
In the following, 
we 
comment on 
the 
algorithm's convergence
and
its properties at convergence. 
In particular, we
show that, at convergence, 
the true reachable set 
is recovered as the number of samples goes to infinity.
Last,
we
discuss its efficient 
receding-horizon
implementation.

\begin{algorithm}
    \caption{Sampling-based GP-MPC-SQP}
    \label{alg:sampling_gpmpc_sqp}
    \SetKwInOut{Input}{input}
    \SetKwProg{Fn}{Function}{:}{end}
    
    \Input{initial guess $\hat{x}^\n_i, \hat{u}_i$, data set $\mathcal{D}_0^\n \supseteq \bar{\mathcal{D}}$}
        \For{\text{$j = 1, \ldots L$ SQP iterations}}{
            \tcc{Preparation phase}
            Set $\hat{Z}^\n \leftarrow \begin{bmatrix} \ldots, (\hat{\state}^\n_i, \hat{\coninput}_i), \ldots \end{bmatrix}$\;
            Sample 
            $
            \hat{G}^\n
            \sim 
            \GPtrunc[N](\mujoint(\hat{Z}^\n),\Sigmajoint(\hat{Z}^\n);\mathcal{D}^\n_{0:j-1})$\; 
            Compute $\hat{A}^\n_i,\hat{B}^\n_i, \hat{H}^\n_{x,i}, \hat{H}^\n_{x,\horizon}, \hat{H}^\n_{u,i}$, $H_i^\n$, $q_i^\n$\; 
            Set $\mathcal{D}_j^\n \leftarrow \left\{ \hat{Z}^\n, \hat{G}^\n \right\}$\;
            \tcc{Feedback phase}
            Solve QP~\eqref{eq:SafeGPMPCsqp} for $\Delta \bm{\state}, \Delta \bm{\coninput}$\;
            Set $\{ \bm{\hat{\state}}, \bm{\hat{\coninput}} \} \leftarrow \{ \bm{\hat{\state}} + \Delta \bm{\state}, \bm{\hat{\coninput}} + \Delta \bm{\coninput} \}$\;
        }
    \Return{$\hat{x}^\n_i, \hat{u}_i, \mathcal{D}^\n_{0:\numsqp}$}
\end{algorithm}

\subsubsection{Convergence of the SQP algorithm}

Applied to 
SQP, 
Lemma~\ref{prop:SQPsampling} allows us to obtain the required values and Jacobians of each sample in~\eqref{eq:SQP_needed_g_gradients} while running the algorithm, 
\emph{as if} we had sampled the infinite-dimensional function 
\mbox{$g^\n \sim \GPtrunc[GP](0,\kernelfunc,\bar{\mathcal{D}})$} 
beforehand to evaluate and differentiate it at the
SQP iterates.
This implies that 
the convergence of the SQP iterates towards a local minimizer of the NLP~\eqref{eq:SafeGPMPCinf}
is not affected, 
but rather 
follows standard arguments for Newton-type methods, see, e.g.~\cite[Sec.~18]{nocedal_numerical_2006}.

\subsubsection{Simultaneous GP sampling and optimization}

A key challenge for
simulations with GP-SSMs 
using
forward-sampling
is
that the GP needs to be sampled (and re-conditioned) \emph{in sequence} along the simulation horizon. 
However, due to the proposed \emph{simultaneous} optimization over state and input variables in the SQP algorithm, 
at each iteration, 
function and derivative values 
can efficiently be drawn jointly and in parallel 
based on the
state and input values $(\hat{x}^\n_i, \hat{u}_i)$
at the $i$-th prediction step.
The functions 
\mbox{$g^\n \sim \GPtrunc[GP](0,\kernelfunc;\bar{\mathcal{D}})$} 
are thus efficiently sampled while the SQP algorithm iterates towards a feasible and locally optimal solution of the finite-sample problem~\eqref{eq:SafeGPMPCinf}. 
At convergence of the SQP algorithm, 
i.e., when $\Delta \bm{x} = 0$ and $\Delta \bm{u} = 0$ in~\eqref{eq:SafeGPMPCsqp},
\cref{prop:SQPsampling} ensures
that the generated trajectories 
correspond to exact simulations of the GP-SSM dynamics.

\begin{corollary}
    \label{thm:true_traj_at_convergence}
    Let $\bm{\hat{u}} = (\hat{\coninput}_0, \ldots, \hat{\coninput}_{\horizon-1})$, $\bm{\hat{x}}^\n = (\hat{x}^\n_0, \ldots, \hat{x}^\n_\horizon)$ be an input and state sequence obtained by running 
    Algorithm~\ref{alg:sampling_gpmpc_sqp} 
    until convergence.
    Then, $\bm{\hat{x}}^\n$ is equal to the state sequence generated by 
    \mbox{$\state_{0}^\n = \bar{x}_0$}, 
    \mbox{$\state_{i+1}^\n = \dynNom(\state^\n_i, \hat{\coninput}_i) + \gpsubspace \dynGP^\n(\state^\n_i,\hat{\coninput}_i)$},
    for 
    \mbox{$i=0,\ldots,\horizon-1$}, where 
    $\dynGP^\n \sim \GPtrunc[GP](0,\kernelfunc;\bar{\mathcal{D}})$. 
\end{corollary}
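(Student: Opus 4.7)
The plan is to combine two observations: first, that SQP convergence collapses the linearized dynamics constraint back to the nonlinear dynamics equation at the sampled function values, and second, that the iteratively generated samples are, by \cref{prop:SQPsampling}, statistically indistinguishable from evaluating a single function \mbox{$g^\n \sim \GPtrunc[GP](0,\kernelfunc;\bar{\mathcal{D}})$} at the converged iterates.

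First, I would inspect the SQP dynamics constraint in~\eqref{eq:SafeGPMPCsqp}. At convergence of \cref{alg:sampling_gpmpc_sqp}, we have $\Delta \bm{x}^\n = 0$ and $\Delta \bm{u} = 0$, so the linearized dynamics reduces to
\begin{align*}
    \hat{\state}^\n_{i+1} = \dynNom(\hat{\state}^\n_i,\hat{\coninput}_i) + \gpsubspace \dynGP^\n(\hat{\state}^\n_i,\hat{\coninput}_i),
\end{align*}
for \mbox{$i=0,\ldots,\horizon-1$}, together with $\hat{\state}^\n_0 = \bar{x}_0$ from~\eqref{eqn:initialization} (combined with $\Delta \state_0^\n = 0$). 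Thus, the converged iterate $\bm{\hat{x}}^\n$ is exactly the state sequence produced by forward-simulating $\dynNom + \gpsubspace \dynGP^\n$ from $\bar{x}_0$ under the input $\bm{\hat{\coninput}}$, provided the sampled values $\{\dynGP^\n(\hat{\state}^\n_i,\hat{\coninput}_i)\}_{i=0}^{\horizon-1}$ are consistent with a single function draw from the truncated GP.

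Second, it remains to certify this consistency. For each SQP iteration $j = 1,\ldots, \numsqp$, the algorithm augments the data set by $\mathcal{D}_j^\n = \{\hat{Z}^\n_j, \hat{G}^\n_j\}$, where $\hat{G}^\n_j$ is drawn from the truncated Gaussian $\GPtrunc[N](\mujoint(\hat{Z}^\n_j),\Sigmajoint(\hat{Z}^\n_j);\mathcal{D}^\n_{0:j-1})$ obtained by conditioning on all previously generated samples. By \cref{prop:SQPsampling} applied iteratively across SQP iterations, the joint distribution of the accumulated samples satisfies
\begin{align*}
    \prod_{j=1}^{\numsqp} p(\hat{G}^\n_j \,|\, \mathcal{D}^\n_{0:j-1}, \hat{Z}^\n_j) = p\!\left(\hat{G}^\n_{1:\numsqp}\,\big|\,\bar{\mathcal{D}}, \hat{Z}^\n_{1:\numsqp}\right),
\end{align*}
i.e., it is identical to sampling all function values jointly from the posterior $\GPtrunc[GP](0,\kernelfunc;\bar{\mathcal{D}})$ evaluated at $\hat{Z}^\n_{1:\numsqp}$. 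In particular, the values drawn at the converged iterates agree in distribution with $\dynGP^\n(\hat{\state}^\n_i,\hat{\coninput}_i)$ for some $\dynGP^\n \sim \GPtrunc[GP](0,\kernelfunc;\bar{\mathcal{D}})$.

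The main subtlety, and the step I expect to require the most care, is handling the fact that each SQP iteration conditions on a \emph{different} set of query locations $\hat{Z}^\n_j$, which themselves depend on previous samples through the SQP update. \cref{prop:SQPsampling} is precisely what allows one to reinterpret this sequential, adaptive sampling as an equivalent joint draw; hence the argument reduces to invoking it correctly and then combining with the convergence condition $\Delta \bm{x}^\n = 0$. Putting these two pieces together yields the claimed equivalence between the converged SQP trajectory $\bm{\hat{x}}^\n$ and the closed-loop simulation of $\dynNom + \gpsubspace \dynGP^\n$ under $\bm{\hat{\coninput}}$.
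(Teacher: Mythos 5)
Your proposal is correct and follows essentially the same route as the paper: the paper derives \cref{thm:true_traj_at_convergence} directly from the observation that at convergence ($\Delta \bm{x} = 0$, $\Delta \bm{u} = 0$) the linearized dynamics constraint in~\eqref{eq:SafeGPMPCsqp} collapses to the exact forward simulation, combined with \cref{prop:SQPsampling} to certify that the sequentially sampled values are consistent with a single draw $\dynGP^\n \sim \GPtrunc[GP](0,\kernelfunc;\bar{\mathcal{D}})$. Your explicit flagging of the adaptivity of the query locations $\hat{Z}^\n_j$ is a fair point of care, but the paper handles it identically via the same lemma.
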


In addition to the joint sampling of the multivariate GP along the horizon, 
the sampling procedure can also be efficiently parallelized across the independent output dimensions and samples. 
By evaluating the $(\numsamples \times \gpdim)$ samples and output dimensions as a \emph{batch} in the \texttt{GPyTorch}~\cite{gardner_gpytorch_2018} framework,
drastic speedups are achieved on supported parallel computing architectures. 

\subsubsection{Reachable set approximation}

The next proposition establishes that, 
as the number of samples goes to infinity,
at least one of 
the sampled function values~$\dynGP^\n(z)$
will be arbitrarily close to the true dynamics~$\dynGPtrue(z)$,
given that the true dynamics is contained within the uncertainty set~$\dynSet$.

\begin{proposition}
    \label{prop:reachable_set_convergence}
    Let 
    \cref{assump:q_RKHS}
    hold.
    Let 
    $z \in \mathcal{Z}$
    and 
    \mbox{$g^\n(z) \sim \GPtrunc(G_\star; \mu(z), \sigma(z))$}
    where $\n \in [1, \numsamples]$.
    For any $\epsilon > 0$, 
    it holds that
    \begin{align}
        \lim_{\numsamples \rightarrow \infty} 
        \Pr \left(
        \min_{\n \in [1,\numsamples]} 
        \left| 
        \dynGPtrue(z) - g^\n(z) 
        \right|
        < \epsilon
        \right)
        \geq 1 - p.
    \end{align}
\end{proposition}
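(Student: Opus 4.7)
The plan is to decouple the uncertainty about $\dynGPtrue$ from the randomness in the samples, then apply a standard ``at least one of $N$ i.i.d.\ draws lands in a fixed-probability event'' argument.

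First, I would condition on the high-probability event
\mbox{$E \doteq \{\dynGPtrue \in \dynSet\}$}, which by \cref{thm: beta} satisfies $\Pr(E) \geq 1 - p$. On $E$, it holds in particular that $\lbdyn(z) \leq \dynGPtrue(z) \leq \ubdyn(z)$, so the true value $\dynGPtrue(z)$ lies inside the truncation interval defining $\GPtrunc[N](\mu(z),\sigma^2(z))$.

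Next, I would argue that each individual sample has a strictly positive probability of landing within $\epsilon$ of $\dynGPtrue(z)$. Since the kernel is positive definite, $\sigma(z) > 0$, and the one-dimensional truncated Gaussian density
\[
    f(t) = \frac{1}{\sigma(z)}\varphi\!\left(\tfrac{t-\mu(z)}{\sigma(z)}\right) \Big/ \left(\Phi\!\left(\tfrac{\ubdyn(z)-\mu(z)}{\sigma(z)}\right) - \Phi\!\left(\tfrac{\lbdyn(z)-\mu(z)}{\sigma(z)}\right)\right)
\]
is strictly positive on the entire interval $[\lbdyn(z),\ubdyn(z)]$. Hence, for any $\epsilon > 0$, the intersection of the truncation interval with $(\dynGPtrue(z) - \epsilon, \dynGPtrue(z) + \epsilon)$ has positive Lebesgue measure, and therefore
\[
    q(\epsilon, z) \doteq \Pr\bigl(|g^1(z) - \dynGPtrue(z)| < \epsilon \,\big|\, E\bigr) > 0.
\]

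Since the $\numsamples$ samples $g^1(z), \ldots, g^\numsamples(z)$ are drawn i.i.d.\ from the same truncated Gaussian, the event that \emph{all} of them miss the $\epsilon$-ball around $\dynGPtrue(z)$ has conditional probability $(1 - q(\epsilon, z))^\numsamples$, which tends to $0$ as $\numsamples \to \infty$. Combining this with the probability of $E$ yields
\[
    \Pr\!\left(\min_{n\in[1,\numsamples]} |\dynGPtrue(z) - g^\n(z)| < \epsilon\right) \geq \bigl(1 - (1 - q(\epsilon,z))^\numsamples\bigr)\Pr(E),
\]
and taking $\numsamples \to \infty$ gives the claim.

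The only mild obstacle is the edge case where $\dynGPtrue(z)$ coincides with an endpoint of $[\lbdyn(z), \ubdyn(z)]$, but even then the truncated Gaussian assigns positive mass to a one-sided neighborhood of that endpoint, so $q(\epsilon,z) > 0$ still holds and the argument goes through unchanged.
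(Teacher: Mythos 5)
Your proposal is correct and follows essentially the same route as the paper's proof: condition on the event $\dynGPtrue \in \dynSet$ (probability at least $1-p$ by Lemma~\ref{thm: beta}), observe that the truncated Gaussian has full support on $[\lbdyn(z),\ubdyn(z)]$ so each independent sample lands $\epsilon$-close to $\dynGPtrue(z)$ with some positive probability, and let the $(1-q)^\numsamples$ failure probability vanish as $\numsamples \to \infty$. Your added details (the explicit truncated density and the endpoint edge case) only make the argument more careful than the paper's version.
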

\begin{proof}
    \cref{thm: beta} implies that 
    \mbox{$\Pr(\dynGPtrue \in \dynSet) \geq 1-p$}. 
    Now, assume that $\dynGPtrue \in \dynSet$.
    Since 
    $\GPtrunc(G_\star; \mu(z), \sigma(z))$
    has full support on the domain~$[\lbdyn(z), \ubdyn(z)]$,
    it holds that 
    \mbox{
        $p_{\mathrm{in}} \doteq 
        \Pr(|\dynGPtrue(z) - g^\n(z)| 
        < \epsilon) 
        > 0$
        for all $n \in [1,\numsamples]$.
    } 
    Therefore, for $\numsamples \rightarrow \infty$, the probability that 
    no sample 
    is \mbox{$\epsilon$-close} to $\dynGPtrue(z)$
    is
    $\lim_{\numsamples \rightarrow \infty} (1 - p_{\mathrm{in}})^\numsamples = 0$. 
    This implies that the joint probability of 
    both events is greater or equal~$1-p$,
    which concludes the proof.
\end{proof}

\cref{prop:reachable_set_convergence} can be trivially extended to hold 
for any finite set of
input locations. 
Combined with~\cref{thm:true_traj_at_convergence},
this ensures 
that 
the sampling-based approximation 
convergences
to the true reachable set 
as the number of samples 
increases
(given a finite number of SQP steps).

\subsubsection{Efficient receding-horizon implementation}
\label{sec:receding_horizon}

Next, we discuss how to embed the proposed sampling strategy into a real-time SQP algorithm for a receding-horizon controller
that
solves~\eqref{eq:SafeGPMPCsqp} at each 
time step~$k \in \mathbb{N}$,
applying the first element of 
the optimized input sequence to the true system. 
Thereby, we focus on efficient real-time optimization rather than 
closed-loop guarantees, which are left for future work.

\begin{algorithm}
    \caption{Receding-horizon implementation}
    \label{alg:sampling_gpmpc_closedloop}
    \SetKwInOut{Input}{input}
    \Input{$x(0), \bar{\mathcal{D}}$, $\hat{x}^\n_i, \hat{u}_i$}
    Initialize $\hat{x}_0^\n \leftarrow x(0), \mathcal{D}^\n \leftarrow \bar{\mathcal{D}}$\;
    \For{$k = 0, \ldots$}{
        \tcc{Measure next state}
        Set $x_0^\n \leftarrow f(x(k), \hat{u}_0) + B_g g(x(k), \hat{u}_0)$\;
        \tcc{Run SQP}
        Set $(\bm{\hat{x}}^\n, \bm{\hat{u}}, \mathcal{D}^\n)$ 
        $\leftarrow$ \textbf{Algorithm~\ref{alg:sampling_gpmpc_sqp}}($\bm{\hat{x}}^\n, \bm{\hat{u}}, \mathcal{D}^\n$)\;
        $\rightarrow$ Apply~$\hat{u}_0$ to~\eqref{eq:system_dyn} after \texttt{Feedback phase}\;
        $\rightarrow$ Start \texttt{Preparation phase} for time $k+1$\;
        \tcc{Keep most recent $\tilde{L}$ samples}
        Set $\mathcal{D}^\n \leftarrow \left\{ \bar{\mathcal{D}}, \mathcal{D}^\n_{L-\tilde{L}:L} \right\}$\;
    }
\end{algorithm}
For real-time MPC implementations, 
running a fixed number of SQP iterations 
achieves an efficient trade-off between 
optimality of the current iterate 
and 
fast control feedback
with respect to the
currently processed
initial condition~\cite[Sec.~4.1.2]{diehl_real-time_2001}.
A particularly efficient trade-off is given by the Real-Time Iteration~(RTI)~\cite{diehl_real-time_2005},
which runs a single SQP iteration per time step that is divided into 
a preparation phase, for computing the linearization around the current iterate,
and a feedback phase, for solving the~QP. 
With 
a sufficiently high sampling rate 
and regularity of the problem, 
this allows to continuously track the solution manifold of the finite-sample problem~\eqref{eq:SafeGPMPCinf_forall} 
as it is evolving based on the changing initial condition, see, e.g.~\cite{zanelli_contraction_2019}.

Still, even when running only a fixed number of SQP iterations per time step 
during closed-loop operation, 
storing all previously sampled values of the GP would lead to a steadily increasing solve time. 
Thus, 
to retain a fixed computational workload associated with GP sampling 
at each time step, 
we propose to discard a subset of the sampled data points at each MPC iteration,
keeping only the most recent 
\mbox{$\tilde{L} \leq L$} 
sampled values\footnote{
    While this approach 
    may seem related to
    the limited-memory GP-SSMs introduced in~\cite{beckers_prediction_2021}, 
    note that, in this case, the memory limitation is expressed across MPC iterations rather than across time.
    In each MPC iteration, the functions 
    $\dynGP^\n \sim \GPtrunc[GP](0,\kernelfunc,\bar{\mathcal{D}})$ 
    implicitly generating the sampled values and Jacobians may thus change;    
    however, they are fixed during the runtime of the SQP   algorithm, 
    leading to consistent trajectories at convergence. 
} generated at each call of~\cref{alg:sampling_gpmpc_sqp}.
The complete receding-horizon control algorithm 
is 
shown in \cref{alg:sampling_gpmpc_closedloop}.

By conditioning
the GP on
the $\tilde{L}$ most recently sampled values and Jacobians at 
the previous time step
during closed-loop operation,
smoothness of the GP sample 
provides
that the value and Jacobian in a local neighborhood of the current time step will be close to the previously sampled ones.
We expect that 
standard RTI arguments 
(see, e.g.,~\cite{zanelli_contraction_2019})
carry over 
to the proposed implementation using only a fixed number of SQP steps.

\section{Simulation results}
\label{sec:simulations}
\begin{figure*}
\begin{subfigure}[b]{0.33\textwidth}
    \includegraphics[width=1.0\textwidth]{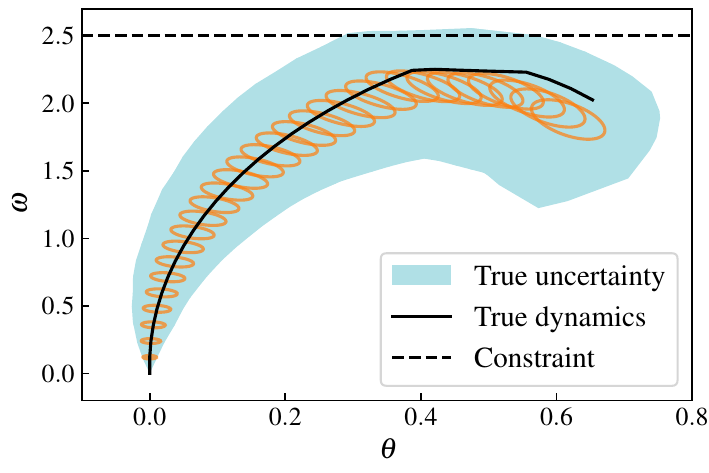}
       \caption{Cautious GP-MPC~\cite{hewing_cautious_2020}}
    \label{fig:cautius}
\end{subfigure}\hfill
\begin{subfigure}[b]{0.33\textwidth}
    \includegraphics[width=1.0\textwidth]{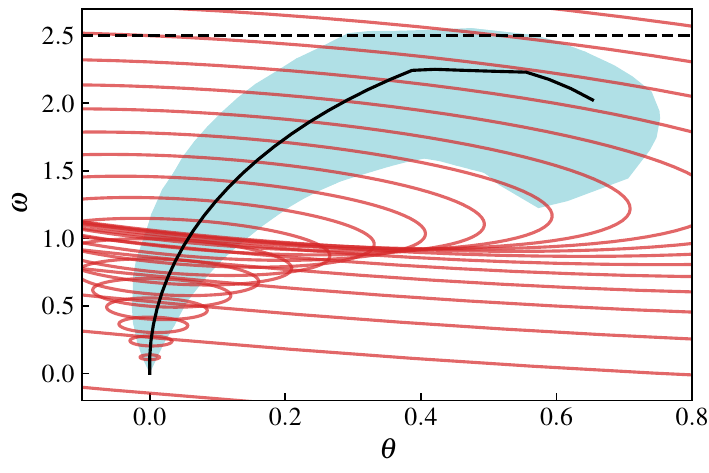}
     \caption{Robust tube-based GP-MPC~\cite{koller_learning-based_2018}}
    \label{fig:safeMPC}
\end{subfigure}
\begin{subfigure}[b]{0.33\textwidth}
    \includegraphics[width=1.0\textwidth]{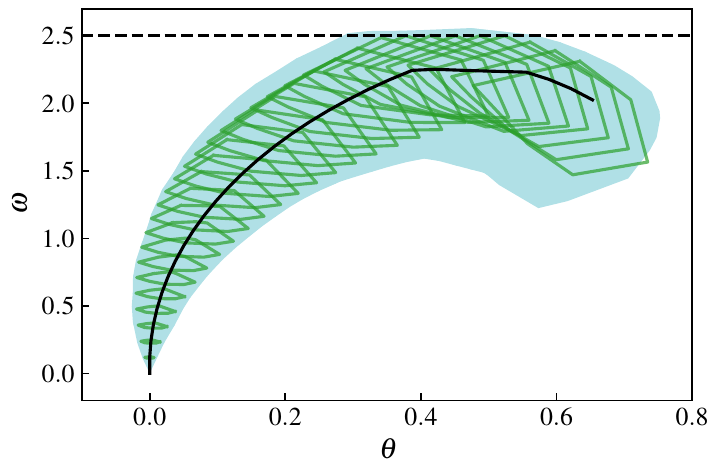}
    \caption{Robust sampling-based GP-MPC (ours)}
    \label{fig:samplingMPC}
\end{subfigure}
\caption{Comparison of uncertainty propagation for a given input sequence $\bm{u}$ by different GP-MPC methods in the pendulum example. The blue-shaded area represents the true uncertainty propagation 
for \mbox{$2\times 10^5$} dynamics sampled from $\GPtrunc[GP](0, \kernelfunc; \bar{\mathcal{D}})$, 
the solid black line
represents 
the true system, and the dashed line 
shows 
the 
angular velocity 
constraint.
Cautious GP-MPC significantly under-approximates (orange, \cref{fig:cautius}), Robust tube-based GP-MPC significantly over-approximates (red, \cref{fig:safeMPC}), whereas Robust sampling-based (proposed) GP-MPC provides a close approximation (green, \cref{fig:samplingMPC}) to the true uncertainty set, even with only 20 dynamics samples.}\label{fig:uncertainity}
\end{figure*}

In this section, we demonstrate our method 
with two 
numerical examples.
First, we show the effectiveness of the uncertainty propagation resulting from the proposed sampling-based GP-MPC method 
in comparison to existing methods~\cite{hewing_cautious_2020,koller_learning-based_2018} using a pendulum example. 
Second, we demonstrate closed-loop constraint satisfaction
under challenging constraints for a more realistic car dynamics model.

\looseness -1 In both numerical examples, we model the unknown dynamics using a separate GP for each output dimension~(cf.~\cref{remark:multidim_gp})
and set $\sqrt{\beta} = 2.5$.
A squared-exponential kernel is employed,
whose hyper-parameters
are determined using 
maximum-likelihood estimation. 
In both examples, the measurement noise 
in \cref{eq:measurements}
is set to \mbox{$\lambda^2 = 10^{-6}$}. 
For numerical stability, the same noise value is used for 
the iterative sampling strategy (\cref{sec:SQP_sequential_sampling}). 
To simplify comparisons, 
the
simulations are carried out 
without a terminal set.
\cref{alg:sampling_gpmpc_sqp} is implemented%
\footnote{%
The source code is available at \url{https://github.com/manish-pra/sampling-gpmpc}, \doi{10.3929/ethz-b-000693678}
}
using the \texttt{Python} interfaces of \texttt{acados}~\cite{verschueren_acadosmodular_2022} and~\texttt{CasADi}~\cite{andersson_casadi_2019}; for efficient GP sampling, we employ~\texttt{GPyTorch}~\cite{gardner_gpytorch_2018}.

\subsection{Comparison of uncertainty propagation using a pendulum}
We consider 
a pendulum whose 
dynamics are given by 
\begin{align*}
    \begin{bmatrix}
        \theta(\ki+1) \\
        \omega(\ki+1)
    \end{bmatrix} =     \begin{bmatrix}
        \theta(\ki) + \omega(\ki) \Delta  \\
        \omega(\ki) - g_a \sin(\theta(\ki)) \Delta / l + \alpha(\ki) \Delta 
    \end{bmatrix}. 
\end{align*}
\looseness -1 The pendulum state is $\state = [\theta, \omega]^\top$, 
where $\theta[\si{rad}]$ denotes the angular position, $\omega[\si{rad/s}]$, the angular velocity, and the control input is the angular acceleration $\alpha[\si{rad/s^2}]$. The constant parameter $l = 1\si{m}$ denotes the length of the pendulum, $\Delta=0.015 \si{s}$, the discretization time and $g_a=10 \si{m/s^2}$, the acceleration due to gravity. 
We consider the case of completely unknown dynamics, i.e., \mbox{$\dynNom(\state,\coninput) = 0$}, 
and 
residual nonlinear dynamics 
\mbox{$\dynGP: \R^3 \to \R^2$},  
\mbox{$\dynGP(\state,\coninput) = \dynGP(\theta, \omega, \alpha)$},  
and 
\mbox{$\gpsubspace = \mathbb{I}_{2 \times 2}$}.
The GP is trained using
$|\bar{\mathcal{D}}| = 45$ data points 
(including derivatives)
on an equally-spaced
$3 \times 3 \times 5$
mesh grid 
in the constraint set 
\mbox{$\mathcal{Z} = \{ (\theta, \omega, \alpha) \in [-2.14,2.14] \times [-2.5,2.5] \times[-8,8] \}$}.
The task is to control the pendulum from $x = (0,0)$ to a desired 
state of $x_{\mathrm{des}} = (2.5, 0)$. 
The cost of the MPC Problem~\eqref{eq:SafeGPMPCinf} 
is
given by
\mbox{$l_i(\theta_i, \alpha_i) = 50 (x_i - x_{\mathrm{des}})^2 + 0.1 \alpha_i^2$}, 
$\forall i =0,\ldots,\horizon-1$,
where $\horizon = 31$.

\looseness -1 We compare the uncertainty propagation of three methods in \cref{fig:uncertainity}: a) Cautious GP-MPC~\cite{hewing_cautious_2020}, which uses linearization and independence approximations, b) Robust tube-based GP-MPC~\cite{koller_learning-based_2018}, which uses ellipsoidal tube propagation and c) Robust sampling-based GP-MPC (\cref{alg:sampling_gpmpc_sqp}), which approximates the reachable set using sampled dynamics. 
For a better comparison, the uncertainty propagation is shown for the same
open-loop control sequence $\bm{\coninput}$ 
determined using 
\cref{alg:sampling_gpmpc_sqp} with $\numsamples = 20$ dynamics samples.
In \cref{fig:cautius}, the orange ellipsoids show 
the confidence sets corresponding to 
$\sqrt{\beta}$ standard deviations
for the state
using the 
Cautious GP-MPC method \cite{hewing_cautious_2020}. 
The confidence sets 
significantly under-approximate the true uncertainty (blue shaded area), 
failing to capture the true system uncertainty.
\cref{fig:safeMPC} demonstrates the uncertainty propagation computed using the Robust tube-based GP-MPC~\cite{koller_learning-based_2018} method, where the red ellipsoids show
safe sets
obtained for the same 
high-probability 
confidence set~$\dynSet$. 
Due to the exponential growth of the sets along the prediction horizon,
it significantly over-approximates the true uncertainty set, 
which would lead
to an overly conservative controller. 
\cref{fig:samplingMPC} shows convex hulls at each horizon of the 20 simulated trajectories, 
representing the uncertainty propagation of our method. 
In contrast to the other methods, 
the sampling-based approximation
provides a close approximation of the true reachable set 
(which can be further improved by using more samples), 
leading to practically meaningful constraint tightenings.

\subsection{Safe closed-loop control with car dynamics}

\looseness -1 In the following, we illustrate the performance of the proposed receding-horizon implementation for an 
overtaking
maneuver of an autonomous car,
with
particular emphasis on real-time applicability and closed-loop constraint satisfaction. 
We model the nonlinear car dynamics using a kinematic bicycle model as follows:
\begin{align*}
    \!\!\begin{bmatrix}
        \!\xpos(\ki\!+\!1)\!\!\\ \!\ypos(\ki\!+\!1)\!\\ \!\theta(\ki\!+\!1)\!\\ \!v(\ki\!+\!1)\!
    \end{bmatrix} \!\! &= \!\!\underbrace{\begin{bmatrix}
        \xpos(\ki) \\  \ypos(\ki) \\ \theta(\ki) \\ \!v(\ki) \!+ \!a(\ki) \Delta\!\!
    \end{bmatrix}}_{\eqqcolon \dynNom(\state,\coninput)} \!\! + \!\! 
    \underbrace{
        \begin{bmatrix} \mathbb{I}_{3 \times 3} \\
        0_{1 \times 3} 
    \end{bmatrix}}_{\eqqcolon \gpsubspace}
    \!\! \underbrace{\begin{bmatrix}
    \!v(\ki) \cos(\theta(\ki) \!+\! \zeta_k) \Delta \!  \\ 
    \!v(\ki) \sin(\theta(\ki) \!+ \!\zeta_k) \Delta \! \\
    \!v(\ki) \sin(\zeta_k) l_r^{-1} \Delta \! 
\end{bmatrix}}_{\eqqcolon \dynGP(\state, \coninput)} 
\label{eqn:bicycle_dyn}
\end{align*}
where \mbox{$\zeta_k =\tan^{-1}\left(\frac{l_r}{l_f + l_r} \tan(\delta(k))\right)$} is the slip angle~$[\si{rad}]$. The states $\state = [\xpos, \ypos, \theta, v]^\top$ consist of the position~$[\si{m}]$ of the car in 2D-Cartesian coordinates $[\xpos, \ypos]^\top$, its absolute heading angle~$\theta [\si{rad}]$ and longitudinal velocity~$v [\si{m/s}]$; the control input~\mbox{$\coninput = [\delta, a]^\top$}, of the steering angle~$\delta [\si{rad}]$ and linear acceleration~$a [\si{m/s^2}]$.
The distances between the car's center of
gravity to the front and the rear wheel
are denoted by
\mbox{$l_f = 1.105 [\si{m}]$} and
\mbox{$l_r = 1.738 [\si{m}]$},
respectively. 
The known part of the dynamics, $\dynNom(\state,\coninput)$, 
assumes simple integrator dynamics. 
The unknown part, \mbox{$\dynGP: \R^3 \to \R^3$}, \mbox{$\dynGP(\state, \coninput) = \dynGP(\theta, v, \delta)$}, is a nonlinear function of heading angle, velocity and steering angle.
The GP is trained with
45 prior data points
(excluding derivatives)
on an
equally spaced $3\times3\times5$ mesh grid along $(\theta, v, \delta)$ in the 
set 
$\{ (x,u) \in [-2.14,70] \times [0,6]\times[-1.14,1.14]\times[-1,15]\times[-0.6,0.6]\times[-2,2] \}$,
given by the intersection of the track, velocity and input constraints. 
In addition, the full constraint set includes
ellipsoidal obstacle avoidance constraints 
accounting for the size of the ego and other vehicles,
$({\xpos}_i - x_e)^2/9 + ({\xpos}_i - y_e)^2 \geq 5.67$,
respectively centered at the other vehicle's position $(x_e, y_e)$.

\begin{figure*}
    \centering
    \includegraphics[trim=-0.5 21.5 -0.5 21.5, clip, width=1\textwidth]{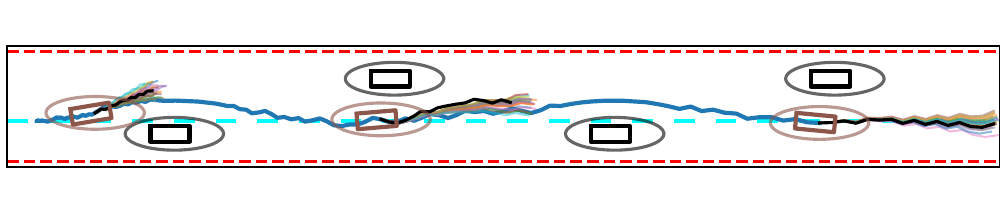}
    \caption{Demonstration of safe closed-loop control using the proposed sampling-based GP-MPC with car dynamics. The ego car (brown) successfully overtakes other vehicles (black) from left to right. The solid blue line represents the resulting closed-loop trajectory. The dashed cyan line is a reference along the y-axis, while multiple thin lines show trajectory prediction with various trajectory samples used in sampling-based GP-MPC (\cref{alg:sampling_gpmpc_closedloop}).}
    \label{fig:car_trajectory} 
\end{figure*}

\looseness -1 The task is to drive the car in the right lane 
at \mbox{$\yposref = 1.95$} %
to the target destination at 
\mbox{$\xposref = 70$}
while avoiding obstacles as shown in \cref{fig:car_trajectory}. 
Problem~\eqref{eq:SafeGPMPCinf} is solved with stage cost 
\mbox{$l_i(\state, \coninput) = \| x_i \|^2_Q + \| u_i \|^2_R $}, 
where
\mbox{$Q = \mathrm{diag}(2,36,0.07,0.005)$}
and
\mbox{$R = \mathrm{diag}(2, 2)$};
the prediction horizon is set to 
$\horizon = 16$ with sampling time 
$\Delta = 0.06 \si{s}$.
\cref{alg:sampling_gpmpc_closedloop}
is run with
\mbox{$\numsamples = 20$}
dynamics samples for $L=2$ SQP iterations.
As shown in \cref{fig:car_trajectory}, the sampling-based GP-MPC plans a safe trajectory with respect to each of the dynamics samples,
driving the car to the target destination while avoiding obstacles 
despite the uncertain dynamics.
In \cref{tab:timings}, 
we report the execution time 
per MPC solve, 
averaged over 50 time steps,
when running~\cref{alg:sampling_gpmpc_closedloop}  
on an
AMD EPYC 7543P Processor at 2.80GHz,
with different numbers of SQP iterations~$\numsqp$ and 
dynamics samples~$\numsamples$.
The GP sampling is parallelized
on an RTX 4090 GPU,
carrying over $\tilde{L} = 1$ GP samples 
between MPC iterations.
In particular, for $N=20$ dynamics and 
$L=1$ 
SQP 
iterations
real-time feasible execution times of 
32.16 ms ($\approx$ 31 Hz)
are achieved.

\newcolumntype{Y}{>{\centering\arraybackslash}X}

\begin{table*}[h] %
\centering
\caption{Average run time and standard deviation (in ms) of \cref{alg:sampling_gpmpc_sqp}  for $\horizon=16$ in the car example}
\begin{tabularx}{\textwidth}{c | Y | Y | Y | Y | Y }
$\numsamples$ $\arraybackslash$ & $\numsqp=1$ & $\numsqp=2$ & $\numsqp=3$ & $\numsqp=4$ & $\numsqp=5$ \\
\hline \hline
5  & 20.72 $\pm$ 2.07 & 42.90 $\pm$ 3.20 & 51.58 $\pm$ 18.95 & 78.13 $\pm$ 20.70 & 109.45 $\pm$ 16.78 \\
10 & 24.33 $\pm$ 4.23 & 49.75 $\pm$ 5.78 & 75.76 $\pm$ 6.20 & 103.18 $\pm$ 15.04 & 124.48 $\pm$ 21.61 \\
20 & 32.16 $\pm$ 8.77 & 63.94 $\pm$ 5.56 & 102.22 $\pm$ 12.77 & 138.82 $\pm$ 21.00 & 164.68 $\pm$ 26.96 \\
40 & 51.00 $\pm$ 16.71 & 100.23 $\pm$ 17.28 & 160.88 $\pm$ 18.27 & 237.50 $\pm$ 30.94 & 294.71 $\pm$ 38.53 \\
80 & 111.24 $\pm$ 24.72 & 234.91 $\pm$ 38.30 & 373.92 $\pm$ 42.02 & 487.02 $\pm$ 81.19 & 668.24 $\pm$ 55.64
\end{tabularx} \label{tab:timings} 
\vspace{-1.0em}
\end{table*}

\section{Conclusion}

\looseness -1 Tractable and accurate propagation of the epistemic uncertainty
poses a 
major obstacle 
for Gaussian process-based MPC.
This paper has
taken a step at addressing this challenge
by 
presenting 
a robust formulation 
for Gaussian process-based MPC,
which employs 
GP confidence bounds
to guarantee
constraint satisfaction with high probability.
For a tractable solution of the resulting GP-MPC problem,
a sampling-based approach has been proposed.
Its approximation quality, 
as well as computational tractability are
illustrated using numerical examples.
On the theoretical side,
a
promising
future research direction
is the establishment of
recursive feasibility and closed-loop constraint satisfaction guarantees for a finite number of samples.
On the computational side, 
further speedups may still be achieved by
exploiting the structure of the sampling-based dynamics in the QP solver,
as well as by employing computationally efficient GP approximations 
and tailored sampling strategies for GP models with derivatives.

\section*{Acknowledgments}
The authors would like to thank Elena Arcari and Anna Scampicchio for literature recommendations and valuable discussions.
Amon Lahr thanks 
Katrin Baumgärtner and Andrea Ghezzi
for an inspiring hackathon and discussions.

\setstretch{0.9}
\bibliographystyle{IEEEtran}
\bibliography{literature, literature_amon_edit}

\end{document}